\documentclass[11pt]{article}%
\usepackage{amssymb}
\usepackage{amsmath}
\usepackage{euler}%
\usepackage{amsfonts}%
\usepackage{graphicx}
\providecommand{\U}[1]{\protect\rule{.1in}{.1in}}
\newtheorem{theorem}{Theorem}

\newtheorem{corollary}[theorem]{Corollary}

\newtheorem{lemma}[theorem]{Lemma}

\newenvironment{proof}[1][Proof]{\noindent\textbf{#1.} }{\ \rule{0.5em}{0.5em}}
\begin{document}

\title{\textsc{uniform continuity of boundary crossing for convex sets}}
\author{\textsf{Douglas S. Bridges}}
\date{}
\maketitle

\begin{abstract}%
%TCIMACRO{\TeXButton{noindent}{\noindent}}%
%BeginExpansion
\noindent
%EndExpansion
\textsf{Let }$C$\textsf{\ be an inhabited, bounded, open, convex subset of a
Banach space, and }$\varphi$\textsf{\ the pointwise continuous mapping that
takes each point }$\left(  \xi,z\right)  $\textsf{\ of }$C\times
-C$\textsf{\ to the intersection of the segment }$\left[  \xi,z\right]
$\textsf{\ with the boundary of }$C$\textsf{. We prove constructively that
}$\varphi$\textsf{\ is }\emph{\textsf{uniformly}}\textsf{\ continuous on sets
of the form }$K\times-C$\textsf{\ with }$K$\textsf{\ well contained in }%
$C$\textsf{. }

\end{abstract}%

%TCIMACRO{\TeXButton{sf}{\normalfont\sf}}%
%BeginExpansion
\normalfont\sf
%EndExpansion%
%TCIMACRO{\TeXButton{depth 1}{\setcounter{secnumdepth}{0}}}%
%BeginExpansion
\setcounter{secnumdepth}{0}%
%EndExpansion
%

%TCIMACRO{\TeXButton{noindent}{\noindent}}%
%BeginExpansion
\noindent
%EndExpansion
This note\footnote{%
%TCIMACRO{\TeXButton{sf}{\normalfont\sf}}%
%BeginExpansion
\normalfont\sf
%EndExpansion
\textbf{MSC 2020 classification numbers:} 03F60, 46B20
\par
\ \ \thinspace\textbf{Keywords:} convex, boundary crossing, constructive}%
---written in the framework of Bishop's constructive analysis
\cite{Bishop,BB,Handbook}---deals with a substantial improvement in the
following result from pages 110--112 of \cite{BVtech}.

\begin{theorem}
\label{oct20ta1}Let $X$ be a Banach space, and $C$ an inhabited open convex
subset of $X$ that has inhabited metric complement $-C$\ and is such that
$C\cup-C$ is dense in $X$. Then there exists a pointwise continuous mapping
$\left(  \xi,z\right)  \rightsquigarrow\gamma_{\xi,z}$ of $C\times-C$ onto
$\left(  0,1\right)  $ such that for each $\xi\in C$ and each $z\in-C$,

\begin{itemize}
\item[\emph{(i)}] $0<\gamma_{\xi,z}=\inf\left\{  t\in\left[  0,1\right]
:t\xi+\left(  1-t\right)  z\in C\right\}  <1$;

\item[\emph{(ii)}] $\gamma_{\xi,z}\xi+\left(  1-\gamma_{\xi,z}\right)  z$ is
the unique point in which the segment%
\[
\left[  \xi,z\right]  \equiv\left\{  t\xi+(1-t)z:0\leq t\leq1\right\}
\]
intersects $\partial C$ (the boundary of $C$);

\item[\emph{(iii)}] if $\gamma_{\xi,z}<t\leq1$, then $t\xi+\left(  1-t\right)
z\in C$;

\item[\emph{(iv)}] if $0\leq t<\gamma_{\xi,z}$, then $t\xi+\left(  1-t\right)
z\in-C$.
\end{itemize}%

%TCIMACRO{\TeXButton{noindent}{\noindent}}%
%BeginExpansion
\noindent
%EndExpansion
Moreover, the \textsc{boundary crossing map}%
\[
\varphi:(\xi,z)\rightsquigarrow\gamma_{\xi,z}\xi+\left(  1-\gamma_{\xi
,z}\right)  z
\]
of $C\times-C$ onto $\partial C$ is pointwise continuous.
\end{theorem}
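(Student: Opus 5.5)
Fix $\xi\in C$ and $z\in-C$, and put $p(t)=t\xi+(1-t)z$ and $S=\{t\in[0,1]:p(t)\in C\}$. First I will show that $S$ is an open interval of the form $(a,1]$. Since $C$ is open and $p(1)=\xi\in C$, some $r>0$ satisfies $B(\xi,r)\subset C$. If $t\in S$ and $t<t'\leq1$, then $p(t')$ lies on the segment from $p(t)$ to $\xi$, so $p(t')\in C$ by convexity. Since $z$ lies in the metric complement $-C$, we have $\rho(z,C)>0$. Hence $p(t)\in C$ forces $\Vert p(t)-z\Vert=t\Vert\xi-z\Vert\geq\rho(z,C)$, so $S$ is bounded away from $0$.

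The constructive difficulty is to show that $\gamma_{\xi,z}=\inf S$ exists as a real number. For this I will prove that $S$ is \emph{located}: for $0\leq s<t\leq1$, either $t\in S$ or $s\notin S$ (in fact $p(s)\in-C$). To decide, take $u=(s+t)/2$ and use density of $C\cup-C$ to pick $y$ within $\delta$ of $p(u)$ with $y\in C$ or $y\in-C$, where $\delta$ is small relative to $t-s$ and $r$.

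In the case $y\in C$, I will write $p(t)$ as a convex combination of $y$ and a point of $B(\xi,r)$. The ball around $\xi$ absorbs the error $\delta$ with a weight of order $(1-t)/(t-u)$, so $p(t)\in C$.

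In the case $y\in-C$, I will argue that $p(s)\in-C$. If $x\in C$ were close to $p(s)$, then, with $\delta$ chosen as before, a convex combination of $x$ and a point of $B(\xi,r)$ would lie in $C$ and be close to $y$. That contradicts $\rho(y,C)>0$ and gives a positive lower bound for $\rho(p(s),C)$. I expect this quantitative estimate to be the main obstacle.

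Once $S$ is located, a standard interval-halving construction gives $\gamma=\inf S$, with $0<\gamma<1$, and yields (i). Part (iii) follows from the upward-closure of $S$. Part (iv) follows from the located dichotomy: for $t<\gamma$, choose $t<t'<\gamma$; then $t'\notin S$, so the alternative $p(t)\in-C$ must hold.

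For (ii), the point $p(\gamma)$ is a limit of points $p(t)$ with $t\downarrow\gamma$, which lie in $C$, and of points $p(t)$ with $t\uparrow\gamma$, which lie in $-C$. So $p(\gamma)$ lies in the closure of $C$ and is not in $C$, hence $p(\gamma)\in\partial C$. Uniqueness: any other point of the segment is $p(t)$ with $t\neq\gamma$, so by (iii) or (iv) it lies in $C$ or in $-C$, and neither is in $\partial C$.

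For pointwise continuity at $(\xi_0,z_0)$, fix $\varepsilon>0$. Put $t_1=\gamma_0+\varepsilon$ and $t_2=\gamma_0-\varepsilon$. Then $p_0(t_1)\in C$, so some ball of radius $r_1$ about it lies in $C$. Also $p_0(t_2)\in-C$, so $\rho(p_0(t_2),C)=r_2>0$. Since $p$ depends continuously on $(\xi,z)$, uniformly in $t$, nearby $(\xi,z)$ still satisfy $p(t_1)\in C$ and $p(t_2)\in-C$. This forces $|\gamma_{\xi,z}-\gamma_0|\leq\varepsilon$. Continuity of $\varphi$ then follows from the formula $\varphi(\xi,z)=\gamma\xi+(1-\gamma)z$. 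Finally, every $\gamma\in(0,1)$ is attained, by choosing $z$ suitably on a ray from $\xi$, which gives the ``onto'' claim.
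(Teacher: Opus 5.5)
The paper does not prove this theorem itself; it quotes it from pages 110--112 of \cite{BVtech}. So there is no internal proof to compare against, and I judge your proposal on its own merits. Your outline is the right one: upward closure of $S$, a located dichotomy extracted from the density of $C\cup-C$, then $\gamma_{\xi,z}=\inf S$, then a squeeze for continuity (the squeeze is exactly Lemma~\ref{oct23l2}). Your case $y\in C$ works as soon as $\delta<r(t-s)/2$.

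The case $y\in-C$, however, does not close as you describe it. Density supplies $y$ only after $\delta$ has been fixed, so nothing bounds $\rho(y,C)$ below in terms of $\delta$, $r$ and $t-s$. Producing a point of $C$ that is merely \emph{close} to $y$ contradicts nothing. The lower bound it yields for $\rho(p(s),C)$ is only $(\beta-\delta)/\mu$, where $\beta$ is the uncontrolled witness of $y\in-C$, and this need not be positive.

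The remedy is to let the ball absorb the error so that the combination \emph{equals} $y$; then only $y\notin C$ is used. With $u=(s+t)/2$ and $\mu=(1-u)/(1-s)\in(0,1)$ one has $p(u)=\mu p(s)+(1-\mu)\xi$. For $x\in C$ put $w=(y-\mu x)/(1-\mu)$, so that $y=\mu x+(1-\mu)w$ and $\Vert w-\xi\Vert\leq(\delta+\mu\Vert x-p(s)\Vert)/(1-\mu)$. If $\Vert w-\xi\Vert<r$, then $w\in C$ and hence $y\in C$ by convexity, which is absurd; so $\Vert w-\xi\Vert\geq r$. Since $1-\mu=(u-s)/(1-s)\geq(t-s)/2$, the choice $\delta=r(t-s)/4$ (which also serves the first case) gives $\Vert x-p(s)\Vert\geq r(t-s)/4$ for every $x\in C$. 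That is, $p(s)\in-C$.

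The surjectivity claims are also not established. Surjectivity of $\varphi$ onto $\partial C$ is not addressed at all. For $\gamma$, your ray from a fixed $\xi$ reaches values above $\gamma_{\xi,z_0}$ only through points of the line lying beyond $z_0$. Their membership of $-C$ is not covered by (iv); it is the case $t<0$ of Lemma~\ref{oct23l1}.

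Both gaps close with the reflection argument of Lemma~\ref{oct21l1}(ii). Suppose $B(\xi,r)\subset C$, $\zeta\in\partial C$, $\lambda>1$, and write $z_\lambda=\xi+\lambda(\zeta-\xi)$. Every $x\in B(z_\lambda,(\lambda-1)r)$ satisfies $\zeta=\lambda^{-1}x+(1-\lambda^{-1})y$ with $y=(\lambda\zeta-x)/(\lambda-1)\in B(\xi,r)$, so $x\in C$ would force $\zeta\in C$. Hence that ball lies in $\lnot C$, so $z_\lambda\in-C$. Uniqueness in (ii) then gives $\varphi(\xi,z_\lambda)=\zeta$ and $\gamma_{\xi,z_\lambda}=1-1/\lambda$. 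Taking $\lambda=2$ yields surjectivity onto $\partial C$. Letting $\lambda$ range over $(1,\infty)$, with $\zeta=\varphi(\xi,z_0)$ for some $z_0\in-C$, yields surjectivity onto $(0,1)$.

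Two smaller repairs are needed. In the continuity step, restrict to $\varepsilon<\min\{\gamma_0,1-\gamma_0\}$: for $t_1>1$ statement (iii) says nothing, and $p_0(t_1)$ may lie outside $C$. In (ii), justify $p(\gamma)\in\partial C$ by the approximants from both $C$ and $-C$ that you already have. In Bishop's framework the boundary is defined by approximation from both sides, and ``in $\overline{C}$ and not in $C$'' does not immediately give approximation from $-C$.
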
%

%TCIMACRO{\TeXButton{medskip}{\medskip}}%
%BeginExpansion
\medskip
%EndExpansion
%

%TCIMACRO{\TeXButton{noindent}{\noindent}}%
%BeginExpansion
\noindent
%EndExpansion
As we shall show, adding the extra hypothesis that $C$ be bounded enables us
to prove that $\varphi$ is \emph{uniformly continuous }on certain subsets of
$C\times-C$.

Before stating our main theorem, we clarify some definitions and notation.
First, using $\rho$ to denote the standard metric on a normed space $X$, when
$x\in X$ and $S\subset X$ we write

\begin{itemize}
\item $\rho(x,S)\geq\alpha$ as shorthand for `$\left\Vert x-y\right\Vert
\geq\alpha$\emph{\ }for each\emph{\ }$y\in S$';

\item $\rho(x,S)>0$ as shorthand for `there exists $\alpha>0$ such that
$\rho(x,S)\geq\alpha$'; and

\item $\rho(x,S)<\varepsilon$ as shorthand for `there exists $y\in S$\ such
that $\left\Vert x-y\right\Vert <\varepsilon$'.
\end{itemize}%

%TCIMACRO{\TeXButton{noindent}{\noindent}}%
%BeginExpansion
\noindent
%EndExpansion
The \textsc{metric complement} and \textsc{logical complement\ }of $S$ are,
respectively,
\begin{align*}
\text{ }-S &  \equiv\left\{  x\in X:\rho(x,S)>0\right\}  \text{\ \ and}\\
\lnot S &  \equiv\left\{  x\in X:x\notin S\right\}  \text{.}%
\end{align*}
Note that if $S$ is an open set and $B(x,r)\subset\lnot S$ for some $r>0$,
then$\ x\in-S$. A set $K$ is\textsc{\ well contained}\textbf{\ }in $S$ if
there exists $r>0$ such that $x\in S$ whenever $\rho(x,K)<\varepsilon$, in
which case we write $K\subset\subset S$.%

%TCIMACRO{\TeXButton{smallskip}{\smallskip}}%
%BeginExpansion
\smallskip
%EndExpansion

Here is our main result.

\begin{theorem}
\label{oct21t1}Let $X$ be a Banach space, and $C$ an inhabited subset of $X$
that is bounded, open, convex, and such that $C\cup-C$ is dense in $X$. Let
$\varphi$ be the boundary crossing mapping of $C\times-C$ onto $\partial C$.
Then $\varphi$ is uniformly continuous on $K\times-C$ for each set
$K\subset\subset C$.
\end{theorem}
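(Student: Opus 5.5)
Let $r>0$ witness $K\subset\subset C$, so that $B(\xi,r)\subset C$ for each $\xi\in K$. Let $M>0$ bound the diameter of $C$. The plan is to bound $\left\Vert \varphi(\xi,z)-\varphi(\xi',z')\right\Vert$ by splitting it into two estimates: one that moves the interior point $\xi$ with $z$ fixed, and one that moves the exterior point $z$ with $\xi'$ fixed. The key tool in both is the standard convexity cone argument. If $B(\xi,r)\subset C$ and $p\in\overline{C}$, then for $0<s\leq1$ the ball $B\left(s\xi+(1-s)p,sr\right)$ lies in $C$. This follows from convexity and openness of $C$, approximating $p$ by points of $C$ and using the inhabited-ball trick to stay constructive.

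\textbf{Step 1: vary $z$ with $\xi$ fixed.} Fix $\xi\in K$ and $z,z'\in-C$, and write $p=\varphi(\xi,z)$ and $p'=\varphi(\xi,z')$. Both $p$ and $p'$ lie in $\partial C\subset\overline{C}$, and $\left\Vert p-\xi\right\Vert\leq M$. We may parametrise the ray from $\xi$ through $z$, and $p$ is where this ray leaves $C$. Suppose $\left\Vert p-p'\right\Vert>\varepsilon$. By the cone lemma, the cone with apex $p$ over $B(\xi,r)$ lies in $C$ near the segment $[\xi,p)$. Points of the ray beyond $p$ are in $-C$ by (iv) of Theorem \ref{oct20ta1}. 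The quantitative form of this is the following: the "angle" at $\xi$ between the directions $z-\xi$ and $z'-\xi$ is controlled by $\left\Vert z-z'\right\Vert/\left\Vert z-\xi\right\Vert$, and $\left\Vert z-\xi\right\Vert\geq r$. Using the cone around $[\xi,p']$ with aperture about $r/M$, a point of $[\xi,z]$ at distance about $\varepsilon$ from $p$ on the wrong side would lie in $C$, or in $-C$, contradicting (iii)/(iv). From this we derive a $\delta$, depending only on $\varepsilon,r,M$, such that $\left\Vert z-z'\right\Vert<\delta$ implies $\lnot(\left\Vert p-p'\right\Vert>\varepsilon)$, hence $\left\Vert p-p'\right\Vert\leq\varepsilon$. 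Concretely I would compare $\gamma_{\xi,z}$ and $\gamma_{\xi,z'}$. For $t>\gamma_{\xi,z}+\eta$, the point $t\xi+(1-t)z$ lies in the interior cone at depth at least about $\eta r/M$. So $t\xi+(1-t)z'$, which is within $(1-t)\delta$ of it, is in $C$, and hence $\gamma_{\xi,z'}\leq\gamma_{\xi,z}+\eta$. Symmetrically $\gamma_{\xi,z}\leq\gamma_{\xi,z'}+\eta$, again using the cone at $p'$. Then
\[
\left\Vert p-p'\right\Vert\leq\left\vert \gamma_{\xi,z}-\gamma_{\xi,z'}\right\vert \left\Vert \xi-z\right\Vert+\delta.
\]
The factor $\left\Vert \xi-z\right\Vert$ is unbounded since $-C$ is unbounded. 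I will handle it by reparametrising along the ray by arc length from $\xi$ rather than by $t$: distances along the ray to the boundary are at most $M$. Set $u=z-\xi$ and $u'=z'-\xi$, and replace $z$ by $\xi+Mu/\left\Vert u\right\Vert$ when $\left\Vert u\right\Vert>M$. This does not change $\varphi$, and the normalisation map is uniformly continuous on $\{\left\Vert u\right\Vert\geq r\}$.

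\textbf{Step 2: vary $\xi$ with $z$ fixed.} This is analogous. For $\xi,\xi'\in K$ with $\left\Vert \xi-\xi'\right\Vert<\delta$, compare the two segments to $z$, again after normalising so that $\left\Vert z-\xi\right\Vert\leq 2M$. Points of $[\xi,z]$ strictly inside $C$ by a margin lie in the cone over $B(\xi,r)$ with apex $p$, and their counterparts on $[\xi',z]$ are within $\delta$. So the same $\eta$-argument bounds $\left\vert \gamma_{\xi,z}-\gamma_{\xi',z}\right\vert$. Combining Steps 1 and 2 with the triangle inequality gives uniform continuity on $K\times-C$.

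\textbf{Main obstacle.} The delicate step is the reverse inequality in each comparison. I need to show that points just before the crossing on the perturbed segment are \emph{not} in $C$, i.e.\ that $\gamma$ cannot drop by much. Constructively this must be phrased positively: I would show that if $t<\gamma_{\xi,z'}-\eta$ then $t\xi+(1-t)z'\in-C$ by (iv). Then, by the cone at $p'=\varphi(\xi,z')$, the point $t\xi+(1-t)z$ cannot be in $C$ with a margin. Otherwise convexity with $B(\xi,r)$ would put $p'$ in the interior of $C$, contradicting $p'\in\partial C$. This yields the needed inequality by a proof of negation, which suffices since inequalities $\leq$ on reals are negative statements.
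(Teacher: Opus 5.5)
Your parameter comparison is sound, and it works differently from the paper's argument. The cone lemma at $\varphi(\xi,z)$ gives $\gamma_{\xi,z'}\le\gamma_{\xi,z}+\eta$, and the reverse inequality follows by exchanging the two pairs, since both lie in $K\times-C$. So the `main obstacle' you describe is not really one.

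The real difficulty is the unbounded factor $\Vert\xi-z\Vert$, and your remedy for it fails in Step~2. This is a genuine gap. The normalised point $\tilde z=\xi+M(z-\xi)/\Vert z-\xi\Vert$ leaves $\varphi(\xi,\cdot)$ unchanged only because it lies on the ray from $\xi$ through $z$. It does not lie on the ray from $\xi'$ through $z$, so in general $\varphi(\xi',\tilde z)\neq\varphi(\xi',z)$, and Step~2 as written estimates the wrong quantity. Without normalisation, a bound $\vert\gamma_{\xi,z}-\gamma_{\xi',z}\vert\le\eta$ with $\eta$ independent of $z$, multiplied by $\Vert\xi'-z\Vert$, gives nothing uniform on $-C$.

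In Step~1 the normalisation is fine apart from details. `When $\Vert u\Vert>M$' is not a decidable case split, so use $u\mapsto\min\{1,M/\Vert u\Vert\}u$ instead. The new point is in $-C$ because $\Vert y-\xi\Vert\le M-r$ for every $y\in C$. And $\varphi$ is unchanged by the uniqueness in Theorem~\ref{oct20ta1}(ii).

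The cleanest repair is to use the exact depth your cone lemma provides rather than the crude one. For $t>\gamma=\gamma_{\xi,z}$ you have $t\xi+(1-t)z=\lambda\xi+(1-\lambda)\varphi(\xi,z)$ with $\lambda=(t-\gamma)/(1-\gamma)$. So this point is the centre of a ball of radius $\lambda r$ contained in $C$. Taking $\lambda=\delta/r$ (with $0<\delta\le r$) shows that $\gamma_{\xi',z'}\le\gamma+(1-\gamma)\delta/r$ whenever $\Vert\xi-\xi'\Vert<\delta$ and $\Vert z-z'\Vert<\delta$, and symmetrically with the pairs exchanged. Because $(1-\gamma_{\xi,z})\Vert z-\xi\Vert=\Vert\varphi(\xi,z)-\xi\Vert\le M$, the unbounded factor disappears. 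You obtain $\Vert\varphi(\xi,z)-\varphi(\xi',z')\Vert\le\delta+(M+2\delta)\delta/r$ directly, with no normalisation and no need to split into two steps.

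If you want to keep normalisation, normalise relative to $\xi$ and to $\xi'$ separately. The two normalised points are within a constant multiple of $\delta$ of each other. So your Step~2 argument is needed only for a bounded exterior point, and Step~1 at base $\xi'$ bridges the rest.

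The paper's proof is the spatial form of the exact-depth observation. Its parameter $s=\frac{\varepsilon}{R}+(1-\frac{\varepsilon}{R})\gamma_{\xi,z}$ corresponds exactly to $\lambda=\varepsilon/R$, so $s\xi+(1-s)z$ lies within $2\varepsilon$ of $\varphi(\xi,z)$ however far away $z$ is. The paper then gets the lower bound $t<\gamma_{\xi',z'}$ from an exterior ball (Lemma~\ref{oct21l1}(ii), which rests on Lemma~\ref{oct23l1}) together with Lemma~\ref{oct23l2}. It concludes by convexity of $B(\varphi(\xi,z),3\varepsilon)$ rather than by estimating $\vert\gamma_{\xi,z}-\gamma_{\xi',z'}\vert$. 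Your symmetry argument, once repaired, lets you dispense with the exterior side altogether.
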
%

%TCIMACRO{\TeXButton{medskip}{\medskip}}%
%BeginExpansion
\medskip
%EndExpansion

Our proof depends on three preliminaries.

\begin{lemma}
\label{oct23l1}Under the hypotheses of \emph{Theorem \ref{oct20ta1}},
let\emph{\ }$\xi\in C$, $z\in-C$, and $t<\gamma_{\xi,z}$\textbf{.
}Then\textbf{\ }$\eta\equiv t\xi+(1-t)z\in-C$.
\end{lemma}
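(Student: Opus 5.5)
The plan is to reduce to a point $w$ of the segment $[\xi,z]$ that is already known to lie in $-C$, and then to transfer a positive lower bound on the distance from $w$ to $C$ over to $\eta$ by convexity. The point is to produce an explicit $\alpha>0$ with $\rho(\eta,C)\geq\alpha$, as the constructive definition of $-C$ demands. Part (iv) of Theorem \ref{oct20ta1} settles the case $0\leq t<\gamma_{\xi,z}$ at once. However, the lemma allows $t<0$, where $\eta$ lies on the line through $\xi$ and $z$ beyond $z$, so I would give a single uniform argument covering both cases.

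First I choose $s$ with $t<s<\gamma_{\xi,z}$ and $0\leq s$. For instance, take $s=0$ if $t<0$. If $t\geq0$, take $s=(t+\gamma_{\xi,z})/2$. The only constructive point is that we cannot decide the sign of $t$ outright; instead we use the approximate splitting $t<\gamma_{\xi,z}/2$ or $t>0$ and take $s=\max\{0,(t+\gamma_{\xi,z})/2\}$, which works in either case. By (iv) of Theorem \ref{oct20ta1}, the point $w\equiv s\xi+(1-s)z$ lies in $-C$, so there is $\alpha>0$ with $\left\Vert w-y\right\Vert\geq\alpha$ for all $y\in C$.

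Next I express $w$ as a convex combination of $\xi$ and $\eta$. Since $t<\gamma_{\xi,z}<1$, we have $1-t>0$, and setting
\[
\mu\equiv\frac{s-t}{1-t}\in(0,1)
\]
a direct computation gives $\mu\xi+(1-\mu)\eta=w$. Indeed, the coefficient of $z$ is $(1-\mu)(1-t)=1-s$, and the coefficients sum to $1$.

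Now let $y\in C$. By convexity of $C$, and since $\xi\in C$, the point $y'\equiv\mu\xi+(1-\mu)y$ belongs to $C$. Hence
\[
\alpha\leq\left\Vert w-y'\right\Vert=(1-\mu)\left\Vert \eta-y\right\Vert\leq\left\Vert \eta-y\right\Vert .
\]
Thus $\rho(\eta,C)\geq\alpha$, and so $\eta\in-C$.

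I expect no real obstacle. The only delicate step is choosing $s$ constructively without deciding whether $t\geq0$, and the approximate splitting above handles that.
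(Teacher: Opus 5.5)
Your proof is correct, and it takes a different route from the paper.

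\textbf{The paper's route.} It fixes $r>0$ with $B(\xi,r)\subset C$ and $B(z,r)\subset -C$, and then splits into cases:
\begin{itemize}
\item[(a)] either $|t|<r/\Vert\xi-z\Vert$, so that $\Vert\eta-z\Vert<r$, or else $t\neq 0$;
\item[(b)] if $t>0$, it quotes Theorem \ref{oct20ta1}(iv);
\item[(c)] if $t<0$, it translates. For $y\in B(\eta,r)$ it puts $x=\xi+(y-\eta)\in C$ and writes $z+(y-\eta)\in B(z,r)\subset -C$ as a convex combination of $y$ and $x$. Hence $y\in C$ is impossible, $B(\eta,r)\subset\lnot C$, and so $\eta\in -C$.
\end{itemize}

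\textbf{Your route.} You use a single homothety centred at $\xi$. Writing $w=\mu\xi+(1-\mu)\eta$ with $w\in -C$, the map $y\mapsto\mu\xi+(1-\mu)y$ sends $C$ into $C$. This gives $\rho(\eta,C)\geq\rho(w,C)/(1-\mu)$ directly.

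\textbf{What this buys you.}
\begin{itemize}
\item Beyond Theorem \ref{oct20ta1}(i),(iv), you need only $\xi\in C$ and the convexity of $C$. You use no ball around $\xi$ or $z$, so no openness.
\item You treat $t\geq 0$ and $t<0$ uniformly. This avoids the paper's approximate splitting on $|t|$ followed by a sign decision for $t\neq 0$.
\item In effect you prove a general fact: if $\xi\in C$ and $w\in -C$, then every point of the ray from $\xi$ through $w$ lying beyond $w$ is in $-C$, at distance at least $\rho(w,C)$ from $C$. The paper's $t<0$ case is the translation-based analogue of this fact with $w=z$.
\end{itemize}

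\textbf{One small remark.} Your appeal to the splitting ``$t<\gamma_{\xi,z}/2$ or $t>0$'' is superfluous. The choice $s=\max\{0,(t+\gamma_{\xi,z})/2\}$ satisfies
\[
0\leq s<\gamma_{\xi,z}\quad\text{and}\quad s\geq(t+\gamma_{\xi,z})/2>t
\]
outright, with no case distinction.
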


\begin{proof}
Since $C$ and $-C$ are open, there exists $r>0$ such that \thinspace
$B(\xi,r)\subset C$ and $B(z,r)\subset-C$. Either $\left\vert t\right\vert
<r/\left\Vert \xi-z\right\Vert $, in which case $\left\Vert \eta-z\right\Vert
<r$ and therefore $\eta\in-C$, or else, as we may assume, $t\neq0$. If $t>0$,
then $\eta\in-C$ by Theorem \ref{oct20ta1}. So we are left with the case
$t<0$. In that case, let $y\in B(\eta,r)$, $w=y-\eta$, and $x=\xi+w$. Then
$x\in B(\xi,r)\subset C$ and $z+w\in B(z,r)\subset-C$. On the other hand,%
\begin{align*}
z+w &  =\frac{1}{1-t}\eta+\frac{-t}{1-t}\xi+w\\
&  =\frac{1}{1-t}(\eta+w)+\frac{-t}{1-t}(\xi+w)\\
&  =\frac{1}{1-t}y+\frac{-t}{1-t}x,
\end{align*}
so if $y\in C$ then, by convexity, $z+w\in C$, a contradiction. It follows
that $B(\eta,r)\subset\lnot C$ and therefore that $\eta\in-C$.
\end{proof}

\begin{lemma}
\label{oct23l2}Under the hypotheses of \emph{Theorem \ref{oct20ta1}}, let
$\xi\in C,z\in-C,$ and $t<s\leq1$. Suppose that $a\equiv t\xi+\left(
1-t\right)  z\in-C$ and $b\equiv s\xi+\left(  1-s\right)  z\in C$. Then
$\ t<\gamma_{\xi,z}<s$.
\end{lemma}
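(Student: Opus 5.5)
We need to prove $t<\gamma_{\xi,z}<s$, where $\gamma=\gamma_{\xi,z}$ is given by Theorem \ref{oct20ta1}. Constructively we cannot simply say ``otherwise $\gamma\le t$''. Instead we produce positive gaps: we show $\gamma - t>0$ and $s-\gamma>0$ by establishing strict inequalities from the openness of $C$ and $-C$ together with properties (i)--(iv).

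\emph{The inequality $\gamma<s$.} Since $b\in C$ and $C$ is open, choose $\delta>0$ with $B(b,\delta)\subset C$. If $s=1$, then $\gamma<1=s$ by (i). Otherwise $s<1$, and for $s'$ with $s-\delta/\left\Vert \xi-z\right\Vert<s'\le s$ the point $s'\xi+(1-s')z$ lies in $B(b,\delta)\subset C$. Here $\xi\neq z$, since $\rho(z,C)>0$. Taking such an $s'<s$ with $s'\ge 0$, the definition of $\gamma$ as an infimum in (i) gives $\gamma\le s'<s$. If $s\le 0$, then $b$ would need to be checked against (iv). Note, however, that $t<s\le 0<\gamma$ would force $b\in-C$ via Lemma \ref{oct23l1}, contradicting $b\in C$, because $C\cap -C=\emptyset$. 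So I would first prove $\gamma<s$ by a cleaner route: compare $s$ with the real number $\gamma$ using approximate splitting. Either $\gamma<s$, or $s<\gamma+\epsilon$ for a small $\epsilon$ to be chosen. In the latter case, the openness argument above yields $s'<s$ with $s'\xi+(1-s')z\in C$ and $s'$ close to $s$. This gives $\gamma\le\max(s',0)<s$ when $s>0$. The case $s\le0$ is excluded, since Lemma \ref{oct23l1} would put $b$ in $-C$.

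\emph{The inequality $t<\gamma$.} Since $a\in -C$, there is $\alpha>0$ with $\rho(a,C)\ge\alpha$. For $t'$ with $|t'-t|<\alpha/\left\Vert\xi-z\right\Vert$, the point $t'\xi+(1-t')z$ is within $\alpha$ of $a$, so it is not in $C$. Hence no $t''\in[0,1]$ with $t''<t+\alpha/\left\Vert \xi-z\right\Vert$ has $t''\xi+(1-t'')z\in C$. The points with parameter in $(t,t+\alpha/\|\xi-z\|)$ avoid $C$ by the same estimate, and parameters $\le t$ avoid $C$ by convexity: if $t''\le t<s$ with $t''\xi+(1-t'')z\in C$, then $a$ would be a convex combination of two points of $C$ and would lie in $C$. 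The combined set of parameters avoiding $C$ is then $[0,t+\alpha/\|\xi-z\|)$. Therefore $\gamma\ge t+\alpha/\|\xi-z\|>t$ when this bound is at most $1$; otherwise $\gamma\ge$ anything below $1$ is impossible, which contradicts $\gamma<1$ and shows that case cannot occur.

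The main obstacle is organising the argument constructively. It avoids proof by contradiction for the strict inequalities by exhibiting explicit margins ($\alpha/\|\xi-z\|$ and the openness radius $\delta$). The case split on the sign of $s$ is handled with Lemma \ref{oct23l1}, using the fact that a point cannot lie in both $C$ and $-C$.
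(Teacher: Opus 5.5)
The gap is in your proof of $\gamma<s$ (writing $\gamma=\gamma_{\xi,z}$). The openness argument itself is sound. If $B(b,\delta)\subset C$ and $s'\in[0,s)$ satisfies $(s-s')\Vert\xi-z\Vert<\delta$, then $s'\xi+(1-s')z\in C$, and the infimum in (i) gives $\gamma\le s'<s$. But such an $s'$ exists only once you know $s>0$ as a positive fact. Your only justification is that ``the case $s\le0$ is excluded'' because Lemma~\ref{oct23l1} would put $b$ in $-C$.

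You note at the outset that refuting a non-strict inequality does not constructively yield the strict one, and that is exactly what happens here. Excluding $s\le0$ gives only $\neg(s\le0)$, i.e.\ $\neg\neg(s>0)$. Passing from this to $s>0$ is Markov's principle, which is not available in the paper's Bishop-style framework. The split ``$s=1$ or $s<1$'' is also undecidable, though dispensable. The approximate splitting ``$\gamma<s$ or $s<\gamma+\epsilon$'' does no work, since the second branch never uses $s<\gamma+\epsilon$.

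What \emph{is} constructively valid is $\neg(x<y)\Rightarrow y\le x$, and that gives the repair. Apply Lemma~\ref{oct23l1} to the hypothesis $s<\gamma$ instead of $s\le0$. It yields $b\in C\cap-C$, which is impossible, so $\gamma\le s$, whence $s\ge\gamma>0$ by (i). This is precisely the paper's step. The paper then gets strictness from $\rho(b,\partial C)>0$; your openness argument would finish equally well.

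Your proof of $t<\gamma$, on the other hand, is correct and differs from the paper's. The paper refutes $\gamma<t$ using Theorem~\ref{oct20ta1}(iii) to get $t\le\gamma$. It then upgrades to $t<\gamma$ because $a$ is bounded away from the boundary point $\varphi(\xi,z)$. You instead show directly that $T=t+\alpha/\Vert\xi-z\Vert$ is a lower bound of $\{u\in[0,1]:u\xi+(1-u)z\in C\}$. You do this using the $\alpha$-margin at $a$, and convexity with $b\in C$ in place of (iii). This displays the gap $\gamma-t\ge\alpha/\Vert\xi-z\Vert$ explicitly, at the cost of a little more work.

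Two tidy-ups are needed there. First, cover the parameters $u<T$ by approximate splitting ($u<t$ or $u>t-\alpha/\Vert\xi-z\Vert$), rather than the undecidable split $u\le t$ or $u>t$. Second, drop the case distinction on whether $T\le1$: any lower bound of that set is automatically $\le\gamma$, and also $\le1$, since $1$ belongs to the set.
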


\begin{proof}
If $\gamma_{\xi,z}<t$, then $\gamma_{\xi,z}<t<1$, so (by Theorem
\ref{oct20ta1}(iii)) $a\in C$, which is absurd; whence $t\leq\gamma_{\xi,z}$.
Since $-C$ is open, $\rho(a,\partial C)>0$ and therefore $t<\gamma_{\xi,z} $.
On the other hand, by Lemma \ref{oct23l1}, $s\not < \gamma_{\xi,z}$ and
therefore $\gamma_{\xi,z}\leq s\leq1$; but $C$ is open, so $\rho(b,\partial
C)>0$ and therefore $\gamma_{\xi,z}<s$.
\end{proof}

\begin{lemma}
\label{oct21l1}Under the hypotheses of \emph{Theorem \ref{oct20ta1}}, let
$\xi\in C$ and $\zeta\in\partial C$. Let $\varepsilon,r,R$ be such that
$0<\varepsilon<r<R$ and $B(\xi,r)\subset C\subset B(0,R)$. Then

\begin{itemize}
\item[\emph{(i)}] $B(\tfrac{\varepsilon}{R}\xi+(1-\tfrac{\varepsilon}{R}%
)\zeta,\frac{\varepsilon r}{R})\subset C\cap B(\zeta,3\varepsilon)$ and

\item[\emph{(ii)}] $B(-\tfrac{\varepsilon}{R}\xi+(1+\tfrac{\varepsilon}%
{R})\zeta,\frac{\varepsilon r}{R})\subset-C\cap B(\zeta,3\varepsilon)$.
\end{itemize}
\end{lemma}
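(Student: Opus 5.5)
Write $\lambda=\varepsilon/R$, so that $0<\lambda<1$ because $\varepsilon<r<R$. Put $p=\lambda\xi+(1-\lambda)\zeta$ and $q=-\lambda\xi+(1+\lambda)\zeta$. Each part then splits into a distance estimate and a membership statement.

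\textbf{Distance estimates.} Since $\xi\in C\subset B(0,R)$ and $\zeta\in\partial C\subset\overline{C}$, both have norm at most $R$, so $\Vert\xi-\zeta\Vert\leq2R$. Hence
\[
\Vert p-\zeta\Vert=\lambda\Vert\xi-\zeta\Vert\leq2\varepsilon
\quad\text{and}\quad
\Vert q-\zeta\Vert=\lambda\Vert\xi-\zeta\Vert\leq2\varepsilon .
\]
Any point within $\varepsilon r/R<\varepsilon$ of $p$ or $q$ therefore lies in $B(\zeta,3\varepsilon)$.

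\textbf{Part (i).} Let $y\in B(p,\lambda r)$ and write $y=\lambda(\xi+w/\lambda)+(1-\lambda)\zeta$ with $\Vert w\Vert<\lambda r$. Then $x\equiv\xi+w/\lambda$ lies in $B(\xi,r)\subset C$. I would use the standard fact that for open convex $C$, $x\in C$, $\zeta\in\overline{C}$ and $0<\lambda\leq1$, the point $\lambda x+(1-\lambda)\zeta$ lies in $C$. To prove it constructively, choose $\delta>0$ with $B(x,\delta)\subset C$, and pick $c\in C$ with $\Vert c-\zeta\Vert<\lambda\delta/(1-\lambda)$. Set $x'=x+\frac{1-\lambda}{\lambda}(\zeta-c)$, which lies in $B(x,\delta)\subset C$. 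Then $\lambda x'+(1-\lambda)c=\lambda x+(1-\lambda)\zeta$, so this point is in $C$ by convexity. This gives (i).

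\textbf{Part (ii).} This is the step I expect to be the main obstacle, since membership in $-C$ requires positive distance from $C$, not merely non-membership. I would mirror the proof of Lemma \ref{oct23l1}. First show $B(q,\lambda r)\subset\lnot C$. Take $y$ with $\Vert y-q\Vert<\lambda r$ and set $x=\xi-(y-q)/\lambda$, so $x\in B(\xi,r)\subset C$. Then
\[
\zeta=\frac{1}{1+\lambda}\,y+\frac{\lambda}{1+\lambda}\,x .
\]
If $y\in C$, then $\zeta$ would be a convex combination with positive weight on the point $y$ of the open convex set $C$ and the point $x\in C$; the argument of (i) then puts $\zeta\in C$. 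But $\zeta\in\partial C$ and $C$ is open, so this is impossible. Hence $y\notin C$.

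\textbf{From $\lnot C$ to $-C$.} Now take $y\in B(q,\lambda r)$ and choose $s>0$ with $B(y,s)\subset B(q,\lambda r)$. Then $B(y,s)\subset\lnot C$, so $\rho(y,C)\geq s$ and $y\in-C$, by the remark following the definition of the metric complement. The whole open ball $B(q,\lambda r)$ therefore lies in $-C$, which together with the distance estimate gives (ii). The only points needing care are keeping every inference constructive (using strict inequalities and approximation from $C$ rather than case splits) and confirming that $\partial C\subset\overline{C}$ under the paper's conventions.
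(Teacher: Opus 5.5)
Your proof is correct and follows essentially the same route as the paper. In (i) you rescale the perturbation into $B(\xi,r)$ and use the fact that $\lambda x+(1-\lambda)\zeta\in C$ for $x\in C$, $\zeta\in\overline{C}$; the paper cites this fact, whereas you prove it directly. In (ii) you use the same identity $\zeta=\frac{1}{1+\lambda}y+\frac{\lambda}{1+\lambda}x$ to get $B(q,\lambda r)\subset\lnot C$, and then use openness of the ball to pass to $-C$, with one small simplification available: once $x,y\in C$, plain convexity already gives $\zeta\in C$, so the appeal to the argument of (i) there is unnecessary.
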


\begin{proof}
Note that $\zeta\in\overline{C}$, so $\left\Vert \zeta\right\Vert \leq R$.
Given $x\in B\!\,(\tfrac{\varepsilon}{R}\xi+(1-\frac{\varepsilon}{R}%
)\zeta,\frac{\varepsilon r}{R})$, we have%
\begin{align*}
\left\Vert x-\zeta\right\Vert  &  \leq\left\Vert x-\tfrac{\varepsilon}{R}%
\xi-\left(  1-\tfrac{\varepsilon}{R}\right)  \zeta\right\Vert +\left\Vert
\tfrac{\varepsilon}{R}\xi\right\Vert +\left\Vert \left(  1-\tfrac{\varepsilon
}{R}\right)  \zeta-\zeta\right\Vert \\
&  <\frac{\varepsilon r}{R}+\frac{\varepsilon}{R}\left\Vert \xi\right\Vert
+\dfrac{\varepsilon}{R}\left\Vert \zeta\right\Vert <\varepsilon+\frac
{\varepsilon}{R}R+\frac{\varepsilon}{R}R=3\varepsilon\text{.}%
\end{align*}
so $x\in B(\zeta,3\varepsilon)$. Now letting%
\[
y=\frac{R}{\varepsilon}\left(  x-\left(  1-\frac{\varepsilon}{R}\right)
\zeta\right)  ,
\]
we have
\[
\left\Vert y-\xi\right\Vert =\left\Vert \frac{R}{\varepsilon}\,\left(
x-\left(  1-\frac{\varepsilon}{R}\right)  \zeta-\frac{\varepsilon}{R}%
\xi\right)  \right\Vert <\frac{R}{\varepsilon}\frac{\varepsilon r}{R}=r
\]
and therefore $y\in B(\xi,r)\subset C$. Moreover, $0<\tfrac{\varepsilon}%
{R}<\tfrac{\varepsilon}{r}<1$, so by \cite[Cor. 11]{DCArxiv},
\[
x=\dfrac{\varepsilon}{R}y+\left(  1-\dfrac{\varepsilon}{R}\right)  \zeta\in C.
\]
Conclusion (i) now follows.

On the other hand, by Lemma \ref{oct23l1},%
\[
-\frac{\varepsilon}{R}\xi+(1+\frac{\varepsilon}{R})\zeta=-\frac{\varepsilon
}{R}\xi+(1-\frac{-\varepsilon}{R})\zeta\in-C
\]
Let\textbf{\ }$\mathsf{B}\equiv B(-\frac{\varepsilon}{R}\xi+(1+\frac
{\varepsilon}{r})\zeta,\frac{\varepsilon r}{R})$. Arguing like we did above,
we find that $\mathsf{B}\subset B(\zeta,3\varepsilon)$. Moreover, if
$x\in\mathsf{B}$, then%
\[
y\equiv\frac{R}{\varepsilon}\left(  \left(  1+\frac{\varepsilon}{R}\right)
\zeta-x\right)  \in B(\xi,r)\subset C.
\]
If also $x\in C$, then%
\[
\zeta=\frac{\varepsilon/R}{1+\varepsilon/R}y+\left(  1-\frac{\varepsilon
/R}{1+\varepsilon/R}\right)  x\in C,
\]
so $\zeta\in C\cap\partial C$, which is absurd as $C$ is open. Hence
$\mathsf{B}\subset\lnot C$; but $\mathsf{B}$ is open, so $\mathsf{B}\subset-C$
and (ii) follows.
\end{proof}%

%TCIMACRO{\TeXButton{medskip}{\medskip}}%
%BeginExpansion
\medskip
%EndExpansion

\begin{proof}
[\textbf{Proof of Theorem \ref{oct21t1}}]There exists $R>0$ such that
$C\subset B(0,R)$, and there exists $r$ with $0<r<R$ such that if
$\rho(x,K)<r$, then $x\in C$. Thus $B(\xi,r)\subset C$ for each $\xi\in K$.
Let $0<\varepsilon<r$, and consider $\xi,\xi^{\prime}\in K$ and $z,z^{\prime
}\in-C$ such that%
\[
\max\left\{  \left\Vert \xi-\xi^{\prime}\right\Vert ,\left\Vert z-z^{\prime
}\right\Vert \right\}  <\frac{\varepsilon r}{5R}\text{.}%
\]
Setting%
\[
s=\frac{\varepsilon}{R}+\left(  1-\frac{\varepsilon}{R}\right)  \gamma_{\xi
,z},
\]
we have $0<s<1$ and $1-s=(1-\frac{\varepsilon}{R})\left(  1-\gamma_{\xi
,z}\right)  $. Hence, with $\gamma$ and $\varphi$ as in Theorem \ref{oct20ta1}%
\emph{,}
\begin{align*}
& s\xi^{\prime}+\left(  1-s\right)  z^{\prime}-\frac{\varepsilon}{R}%
\xi-(1-\frac{\varepsilon}{R})\varphi(\xi,z)\\
& \ \ \ \left.  =\left(  \frac{\varepsilon}{R}+\left(  1-\frac{\varepsilon}%
{R}\right)  \gamma_{\xi,z}\right)  \xi^{\prime}+\left(  1-\frac{\varepsilon
}{R}\right)  \left(  1-\gamma_{\xi,z}\right)  z^{\prime}\right. \\
& \ \ \ \ \ \ \ \ \ \left.  -\frac{\varepsilon}{R}\xi-(1-\frac{\varepsilon}%
{R})\left[  \gamma_{\xi,z}\xi+\left(  1-\gamma_{\xi,z})z\right)  \right]
\right. \\
& \ \ \ \left.  =\frac{\varepsilon}{R}(\xi^{\prime}-\xi)+\left(
1-\frac{\varepsilon}{R}\right)  \gamma_{\xi,z}(\xi^{\prime}-\xi)\right. \\
& \ \ \ \ \ \ \ \ \ \left.  +\left(  1-\frac{\varepsilon}{R}\right)  \left(
1-\gamma_{\xi,z}\right)  (z^{\prime}-z).\right.
\end{align*}
and therefore%
\begin{align*}
& \left\Vert s\xi^{\prime}+\left(  1-s\right)  z^{\prime}-\frac{\varepsilon
}{R}\xi-(1-\frac{\varepsilon}{R})\varphi(\xi,z)\right\Vert \\
& \ \ \ \left.  \leq\frac{\varepsilon}{R}\left\Vert \xi^{\prime}%
-\xi\right\Vert +\left(  1-\frac{\varepsilon}{R}\right)  \gamma_{\xi
,z}\left\Vert \xi^{\prime}-\xi\right\Vert \right. \\
& \ \ \ \ \ \ \ \ \left.  +\left(  1-\frac{\varepsilon}{R}\right)  \left(
1-\gamma_{\xi,z}\right)  \left\Vert z^{\prime}-z\right\Vert \right. \\
& \ \ \ \left.  \leq\frac{\varepsilon}{R}\left\Vert \xi^{\prime}%
-\xi\right\Vert +\left(  1-\frac{\varepsilon}{R}\right)  \left\Vert
\xi^{\prime}-\xi\right\Vert +\left\Vert z^{\prime}-z\right\Vert \right. \\
& \ \ \ \left.  =\left\Vert \xi^{\prime}-\xi\right\Vert +\left\Vert z^{\prime
}-z\right\Vert \right. \\
& \ \ \ \left.  <\frac{\varepsilon r}{R}.\right.
\end{align*}
Thus, by Lemma \ref{oct21l1},%
\[
s\xi^{\prime}+\left(  1-s\right)  z^{\prime}\in B\left(  \frac{\varepsilon}%
{R}\xi+\left(  1-\dfrac{\varepsilon}{R}\right)  \varphi(\xi,z),\frac
{\varepsilon r}{R}\right)  \subset C\cap B(\varphi(\xi,z),3\varepsilon).
\]
%

%TCIMACRO{\TeXButton{noindent}{\noindent}}%
%BeginExpansion
\noindent
%EndExpansion
On the other hand, setting%
\[
t=\frac{-\varepsilon}{R}+\left(  1+\frac{\varepsilon}{R}\right)  \gamma
_{\xi,z}=\gamma_{\xi,z}-\left(  1-\gamma_{\xi,z}\right)  \frac{\varepsilon}%
{R}<\gamma_{\xi,z},
\]
and using computations similar to those above, we obtain:%
\begin{align*}
& \left\Vert t\xi^{\prime}+\left(  1-t\right)  z^{\prime}-\frac{-\varepsilon
}{R}\xi-(1+\frac{\varepsilon}{R})\varphi(\xi,z)\right\Vert \\
& \ \ \ \ \ \ \left.  \leq\frac{\varepsilon}{R}\left\Vert \xi^{\prime}%
-\xi\right\Vert +\left(  1+\frac{\varepsilon}{R}\right)  \gamma_{\xi
,z}\left\Vert \xi^{\prime}-\xi\right\Vert \right. \\
& \ \ \ \ \ \ \ \ \ \ \left.  +\left(  1+\frac{\varepsilon}{R}\right)  \left(
1-\gamma_{\xi,z}\right)  \left\Vert z^{\prime}-z\right\Vert \right. \\
& \ \ \ \ \ \ \left.  \leq\left\Vert \xi^{\prime}-\xi\right\Vert +\left(
1+\frac{\varepsilon}{R}\right)  \left\Vert \xi^{\prime}-\xi\right\Vert
+\left(  1+\frac{\varepsilon}{R}\right)  \left\Vert z^{\prime}-z\right\Vert
\right. \\
& \ \ \ \ \ \ \left.  \leq\left\Vert \xi^{\prime}-\xi\right\Vert +2\left\Vert
\xi^{\prime}-\xi\right\Vert +2\left\Vert z^{\prime}-z\right\Vert \right. \\
& \ \ \ \ \ \ \left.  \leq\frac{\varepsilon r}{5R}+2\frac{\varepsilon r}%
{5R}+2\frac{\varepsilon r}{5R}=\frac{\varepsilon r}{R}.\right.
\end{align*}
Thus by Lemma \ref{oct21l1}(ii),%
\[
t\xi^{\prime}+\left(  1-t\right)  z^{\prime}\in B\left(  \frac{-\varepsilon
}{R}\xi+\left(  1+\dfrac{\varepsilon}{R}\right)  \varphi(\xi,z),\frac
{\varepsilon r}{R}\right)  \subset-C\cap B(\varphi(\xi,z),3\varepsilon).
\]
It follows from Lemma \ref{oct23l2} and the convexity of $B(\varphi
(\xi,z),3\varepsilon)$ that $t<\gamma_{\xi^{\prime},z^{\prime}}<s$ and%
\[
\varphi(\xi^{\prime},z^{\prime})=\gamma_{\xi^{\prime},z^{\prime}}\xi^{\prime
}+(1-\gamma_{\xi^{\prime},z^{\prime}})z^{\prime}\in B(\varphi(\xi
,z),3\varepsilon).
\]
Hence $\left\Vert \varphi(\xi^{\prime},z^{\prime})-\varphi(\xi,z)\right\Vert
<3\varepsilon$.
\end{proof}%

%TCIMACRO{\TeXButton{medskip}{\medskip}}%
%BeginExpansion
\medskip
%EndExpansion

\begin{corollary}
\label{oct22c1}Under the hypotheses of \emph{Theorem \ref{oct21t1},} for each
$\xi\in C$ the mapping $z\rightsquigarrow\varphi(\xi,z)$ is uniformly
continuous on $-C$.
\end{corollary}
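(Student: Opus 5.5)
The corollary should follow directly from Theorem \ref{oct21t1} applied to the singleton $K=\{\xi\}$. The only thing to check is that $\{\xi\}\subset\subset C$. Since $\xi\in C$ and $C$ is open, there exists $r>0$ with $B(\xi,r)\subset C$. By the shorthand convention, $\rho(x,\{\xi\})<r$ means exactly that $\left\Vert x-\xi\right\Vert <r$. So every such $x$ lies in $B(\xi,r)\subset C$, and the singleton is well contained in $C$ with witness $r$.

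Theorem \ref{oct21t1} then gives uniform continuity of $\varphi$ on $\{\xi\}\times-C$. Concretely, for each $\varepsilon>0$ there is $\delta>0$ such that whenever $\xi_1,\xi_2\in\{\xi\}$, $z,z^{\prime}\in-C$ and
\[
\max\left\{ \left\Vert \xi_1-\xi_2\right\Vert ,\left\Vert z-z^{\prime}\right\Vert \right\} <\delta,
\]
we have $\left\Vert \varphi(\xi_1,z)-\varphi(\xi_2,z^{\prime})\right\Vert <\varepsilon$. Taking $\xi_1=\xi_2=\xi$, the first coordinate difference vanishes. Hence $\left\Vert z-z^{\prime}\right\Vert <\delta$ implies $\left\Vert \varphi(\xi,z)-\varphi(\xi,z^{\prime})\right\Vert <\varepsilon$, which is the uniform continuity of $z\rightsquigarrow\varphi(\xi,z)$ on $-C$.

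The proof of the theorem also yields an explicit modulus. Take $R$ with $C\subset B(0,R)$, shrink $r$ if necessary so that $r<R$, and set $\delta=\varepsilon r/(15R)$ to obtain an error below $\varepsilon$. This is constructively clean, because $r$ and $R$ are given explicitly.

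There is no real obstacle here. The only point needing care is confirming that the well-containment definition, with its shorthand for $\rho(x,S)<\varepsilon$, is satisfied by a singleton inside an open set. That holds since the definition reduces to the openness ball $B(\xi,r)\subset C$.
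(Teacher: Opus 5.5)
Your proof is correct and is the intended argument: the paper states the corollary without proof, as an immediate consequence of Theorem \ref{oct21t1} with $K=\{\xi\}$, which is well contained in $C$ because $C$ is open. In your optional explicit modulus, note that the theorem's proof is written for $0<\varepsilon<r$; to quote it verbatim, replace $\varepsilon$ by $\min\{\varepsilon,r\}$ in $\delta=\varepsilon r/(15R)$.
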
%

%TCIMACRO{\TeXButton{bigskip}{\bigskip}}%
%BeginExpansion
\bigskip
%EndExpansion

%

%TCIMACRO{\TeXButton{bigskip}{\bigskip}}%
%BeginExpansion
\bigskip
%EndExpansion
%

%TCIMACRO{\TeXButton{noindent}{\noindent}}%
%BeginExpansion
\noindent
%EndExpansion
\textbf{Author's address: \ }School of Mathematics and Statistics, University
of Canterbury, Christchurch 8140, New Zealand

\vfill

\begin{flushright}
{\small dsbridges 251026}
\end{flushright}

\end{document}